\newtheorem{theorem}{Theorem}[section]
\newtheorem{lemma}[theorem]{Lemma}
\newtheorem{proposition}[theorem]{Proposition}
\def\Xint#1{\mathchoice
{\XXint\displaystyle\textstyle{#1}}%
{\XXint\textstyle\scriptstyle{#1}}%
{\XXint\scriptstyle\scriptscriptstyle{#1}}%
{\XXint\scriptscriptstyle\scriptscriptstyle{#1}}%
\!\int}
\def\XXint#1#2#3{{\setbox0=\hbox{$#1{#2#3}{\int}$ }
\vcenter{\hbox{$#2#3$ }}\kern-.6\wd0}}
\def\dashint{\Xint-}
\theoremstyle{definition}
\newtheorem{example}[theorem]{Example}
\theoremstyle{remark}
\newtheorem{remark}[theorem]{Remark}
\numberwithin{equation}{section}
\newcommand{\conv}{\mathrm{conv}}
\newcommand{\z}{\zeta}
\newcommand{\R}{\mathbb{R}}
\newcommand{\N}{\mathrm{N}}
\newcommand{\C}{\mathbb{C}}
\newcommand{\D}{\mathbb{D}}
\newcommand{\X}{\mathrm{X}}
\newcommand{\A}{\mathrm{A}}
\newcommand{\bb}{\mathrm{E}}
\newcommand{\cc}{\mathrm{C}}
\newcommand{\Y}{\mathrm{Y}}
\newcommand{\Z}{\mathbb{Z}}
\newcommand{\B}{\mathbf{B}}
\renewcommand{\S}{\mathbf{S}}
\newcommand{\T}{\mathbb{T}}
\newcommand{\1}{\boldsymbol{1}}
\title[Convex-transitive Douglas algebras]{Convex-transitive Douglas algebras}
\author{Mar\'ia J. Mart\'in and Jarno Talponen}
\address{
   M. J. Mart\'{\i}n and J. Talponen\\
   Department of Physics and Mathematics, University of Eastern Finland, P.O.~Box~111, FI-80101 Joensuu
      \\
   Finland}
\email{maria.martin@uef.fi\\ talponen@iki.fi}
\subjclass[2010]{47B33, 46J15, 30H50, 30J05}
\date{\today}
\begin{document}
\maketitle

\begin{abstract}
The convex-transitivity property can be seen as a convex generalization of the almost transitive (or quasi-isotropic) group action of the isometry group of a Banach space on its unit sphere. We will show that certain Banach algebras, including conformal invariant Douglas algebras, are weak-star convex-transitive.
Geometrically speaking, this means that the investigated spaces are highly symmetric. Moreover, it turns out that the symmetry property is satisfied by using
only `inner' isometries, i.e. a subgroup consisting of isometries which are homomorphisms on the algebra. In fact, weighted composition operators arising from function theory on the unit disk will do. Some interesting 
examples are provided at the end.
\end{abstract}

\section{Introduction}

In this paper we study examples of a Nevanlinna class type function spaces having a rich isometry group, which, in a sense, comes close to  acting transitively on the unit sphere. To facilitate the discussion, let us recall some basic notions. We denote the closed unit ball of a Banach space $\X$ by $\B_{\X}$ and the unit sphere of $\X$ by $\S_{\X}$. A Banach space $\X$ is called \emph{transitive} if for each $x\in \S_{\X}$ the \emph{orbit}
$\mathcal{G}_{\X}(x)\stackrel{\cdot}{=}\{T(x)|\ T\colon \X\rightarrow \X\ \mathrm{is\ an\ isometric\ isomorphism}\}$ is $\S_{\X}$. In other words, the isometry group acts transitively on the unit sphere. If $\overline{\mathcal{G}_{\X}(x)}=\S_{\X}$ (resp. $\overline{\conv}(\mathcal{G}_{\X}(x))=\B_{\X}$)
for all $x\in\S_{\X}$, then $\X$ is called \emph{almost transitive} (resp. \emph{convex-transitive}).
It was first reported by Pelczynski and Rolewicz in 1962 \cite{PR} that the space $L^{p}$ is almost transitive for
$p\in [1,\infty)$ and convex-transitive for $p=\infty$ (see also \cite{Rol}). These concepts are motivated by
\emph{Mazur's rotation problem} appearing in \cite[p.~242]{Ba}, which remains open.
We refer to \cite{BR2} for a survey and discussion on the matter.

By applying categorical methods one can verify the existence of a rich class of almost transitive Banach spaces (see e.g. the above survey). Here we provide
examples of concrete $\omega^*$-convex-transitive complex Banach algebras modeled on the unit disk. Specimens of convex-transitive spaces can be found for example in \cite{Ca5}, \cite{Ca6}, \cite{GJK}, \cite{Rambla}, \cite{RT}, \cite{Talponen1}, and \cite{Talponen2}.
\par\smallskip

The \emph{pointwise multiplication operator $M_\psi$ with symbol $\psi$} in a function space $\X$ is defined by the formula
$M_\psi f= \psi f$ for all $f\in\X$.  For a characterization of isometric (not necessarily surjective) pointwise multiplication operators on several spaces of analytic functions in the unit disk $\D$, see \cite{ADMV}. Let $\varphi$ be an analytic function in the unit disk with $\varphi(\D)\subset \D$. The \emph{composition operator $C_\varphi$ with symbol} $\varphi$ is given by $C_\varphi f=f\circ\varphi$. Regarding the problem of determining the isometric composition operators on spaces of analytic functions, we refer  the reader to the works \cite{CW}, \cite{La}, \cite{M1}, \cite{M2}, or \cite{M3}, for instance. Whenever we have an operator of the form $T=M_\psi\circ C_\varphi$, we say that $T$ is a \emph{weighted composition operator}. Forelli \cite{Fo} showed that isometries (surjective or not) for the Hardy spaces $H^p (\D)$, $1\leq p<\infty$, $p\neq 2$, are precisely operators of this form (see \cite[Theorem~4.2.8]{FJ1}).
\par\smallskip
The space $H^\infty (\D)$ is of course well-known and its convex-transitivity appears like a natural
question. On the other hand, its surjective isometries have a very restricted form, namely, they are weighted composition operators
\begin{equation}\label{eq: char}
Tf(z) =\alpha f(\phi(z))\,,\quad z\in\D ,
\end{equation}
where $\alpha$ is a unimodular complex number and $\phi$ is a conformal map from $\D$ onto itself (see \cite[Thm. 4.2.2]{FJ1}). Thus it is easy to see that the space $H^\infty (\D)$ is \emph{not} convex-transitive.
What is required to increase the amount of symmetries of $H^\infty (\D)$ to make the resulting space convex-transitive is allowing `abstract divisions' by functions having possibly zeros on the disk. \emph{Douglas algebras} can be regarded as examples of such spaces. It is a beautiful result that every closed algebra between $H^\infty(\D)$ and $L^\infty(\partial\D)$ is actually a Douglas algebra. We refer  the reader to \cite[Ch. IX]{G} and the references therein for a comprehensive exposition on the topic.

The isometries studied in this paper are `\,inner\,' in the sense that they arise naturally from the function theory in the unit disk and they are multiplicative with respect to the algebraic structure of the spaces. The fact that the isometries are multiplicative implies that their adjoints retain the maximal ideal space of $H^\infty (\D)$ invariant as a subset of the dual space.

Some relevant results (for instance, Theorem~\ref{thm: dense} below) arising from the function theory appears to have been overlooked in the Banach space rotation branch, probably because of the distance of the fields. One of the aims in this paper is pointing out some of these existing connections.

\section{Preliminaries}
As was mentioned before, the open unit disk of the complex plane $\C$ will be denoted by $\D$. Its boundary, the unit circle, is $\partial \D$.
When we wish to emphasize the multiplicative structure of this set, being (additively) the group $\R / \Z$, we denote $\partial\D$ by $\T$.
\par
All Banach spaces here are regarded over the complex field. Recall that
\[
\mathcal{G}_{\X}\stackrel{\cdot}{=}\{T|\ T\colon \X\rightarrow \X\ \mathrm{is\ an\ isometric\ isomorphism}\}
\]
denotes the isometry group of $\X$. These isometries are often called \emph{rotations}.  We refer to \cite{Ca6} and \cite{FJ1} for suitable background information.
We call an element $x\in \S_{\X}$ a \emph{point of convex-transitivity} if $\overline{\conv}(\mathcal{G}_\X(x))=\B_{\X}$ (in the literature a term `big point' is also used).
If $\overline{\conv}^{\omega^*}(\mathcal{G}_\X(x))=\B_{\X}$ for each $x\in \S_X$, then, following \cite{BR2}, we call the space
$\omega^*$\emph{-convex-transitive} (cf. \cite{BR1,BRW}).
\par\smallskip
The following elementary fact is applied here frequently.
\begin{lemma}\label{lm: tri}
Suppose that $y\in \overline{\conv}(\mathcal{G}_\X(x))$ and $z\in \overline{\conv}(\mathcal{G}_{\X}(y))$. Then $z\in \overline{\conv}(\mathcal{G}_\X(x))$. The same conclusion holds analogously for the weak-star closures.
\end{lemma}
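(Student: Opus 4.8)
The plan is to show that the relation ``$u\in\overline{\conv}(\mathcal{G}_\X(v))$'' is transitive, and this will follow from two elementary facts: that $\mathcal{G}_\X$ is a \emph{group}, and that each of its elements is a linear homeomorphism of $\X$.

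First I would record the orbit-invariance of the group action: for every $T\in\mathcal{G}_\X$ one has $T(\mathcal{G}_\X(x))=\mathcal{G}_\X(x)$. Indeed, $T(S(x))=(T\circ S)(x)$, and as $S$ runs through $\mathcal{G}_\X$ so does $T\circ S$, since $T$ is invertible with $T^{-1}\in\mathcal{G}_\X$. Second, since each $T\in\mathcal{G}_\X$ is a surjective linear isometry it is a homeomorphism, hence it commutes with the closed convex hull: $T(\overline{\conv}(A))=\overline{\conv}(T(A))$ for every $A\subseteq\X$. Combining this with orbit-invariance, for the given $y\in\overline{\conv}(\mathcal{G}_\X(x))$ and any $T\in\mathcal{G}_\X$ we obtain
\[
T(y)\in T\big(\overline{\conv}(\mathcal{G}_\X(x))\big)=\overline{\conv}\big(T(\mathcal{G}_\X(x))\big)=\overline{\conv}(\mathcal{G}_\X(x)).
\]
Thus the entire orbit $\mathcal{G}_\X(y)$ lies inside the closed convex set $\overline{\conv}(\mathcal{G}_\X(x))$. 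Being closed and convex, this set must then also contain $\overline{\conv}(\mathcal{G}_\X(y))$, and in particular it contains $z$. This settles the norm case.

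For the weak-star version I would run exactly the same three steps, replacing $\overline{\conv}$ by $\overline{\conv}^{\omega^*}$ throughout. The orbit-invariance step is purely algebraic and transfers unchanged, and the closedness/convexity step is formally identical. The one point requiring attention---and the only place where the argument is not completely automatic---is the homeomorphism property used in the second step: here one needs the relevant isometries to be $\omega^*$-to-$\omega^*$ continuous, so that $T(\overline{\conv}^{\omega^*}(A))=\overline{\conv}^{\omega^*}(T(A))$ still holds. In the setting of this paper the isometries are weighted composition operators arising as adjoints, so this continuity is automatic; I would simply flag it as the hypothesis under which the weak-star statement is to be read. Apart from this bookkeeping there is no genuine obstacle, the content being entirely the interplay between the group structure of $\mathcal{G}_\X$ and the linearity and continuity of its members.
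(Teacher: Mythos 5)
Your argument is correct and is precisely the paper's own (second) proof: the authors dispose of the lemma by noting that $\overline{\conv}(\mathcal{G}_\X(x))$ is invariant under every rotation $T\in\mathcal{G}_\X$, which is exactly the orbit-invariance plus closed-convexity reasoning you spell out. Your explicit flagging of the $\omega^*$-to-$\omega^*$ continuity needed in the weak-star case is a worthwhile refinement the paper glosses over with ``follows in the same vein,'' and it is indeed satisfied by the weighted composition operators actually used later.
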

\begin{proof}
The first part of the statement follows by applying multiple times the triangle inequality, or, alternatively, by observing that the set $\overline{\conv}(\mathcal{G}_\X(x))$ is invariant under all rotations $T \in \mathcal{G}_\X$.
The latter part of the statement follows in the same vein.
\end{proof}

We denote by $m$ the Lebesgue measure on the unit circle $\partial \D$.
In what follows, the weak-star topology \emph{always} refers to the relative topology inherited from the weak-star topology of
$L^\infty = L ^\infty (\partial \D, m)$. As customary in connection with the Douglas algebras, we will abbreviate $\cc=C(\partial \D)$, the complex space of continuous
functions on the unit circle.

\subsection{Decomposition of bounded analytic functions}
The Hardy space $H^\infty=H^\infty(\D)$ consists of those analytic functions $f$ in the unit disk with $$\|f\|_\infty=\sup_{|z|<1} |f(z)|<\infty\,.$$ It is well-known that given any $f\in H^\infty$, the radial limit
\begin{equation}\label{eq-radiallimit}
\widetilde f(e^{i\theta})=\lim_{r\to 1^-} f(re^{i\theta})
\end{equation}
exists for a.e. $\theta\in [0, 2\pi)$, with $\widetilde f\in L^\infty=L^\infty(\partial\D)$ and $\|\widetilde f\|_{L^\infty(\partial\D)}=\|f\|_\infty$. Another important result related to $H^\infty$ functions is the following (see \cite[Ch.~2]{D}): any $f\in H^\infty$ can be written as a product $f=BSF$, where $B$ is a \emph{Blaschke product}, this is, a function of the form
\begin{equation*}\label{eq-Blaschkeprod}
B(z)=\lambda \prod_{n=1}^\infty \frac{|a_n|}{a_n} \frac{a_n-z}{1-\overline{a_n}z}\,,
\end{equation*}
with $\lambda\in\partial\D$ and where $\{a_n\}$ is a sequence of points in $\D$ ($|a_n|/a_n\equiv 1$ is understood whenever $a_n=0$) satisfying $\sum_{n=1}^\infty (1-|a_n|)<\infty$. The set $\{a_n\}$ might be finite and in this case we call the function $B$ a \emph{finite Blaschke product}.
\par
The factor $S$ in the decomposition is a \emph{singular inner function}, i.e.
\[
S(z)=\exp\left\{-\int_0^{2\pi}  \frac{e^{i\theta}+z}{e^{i\theta}-z}\, d\mu(t) \right\}\,,
\]
where $\mu$ is a bounded nondecreasing function on $\partial\D$ with $\mu'(t)=0$ a.e. Finally, $F$ is an \emph{outer function}, this is
\begin{equation}\label{eq-outer}
F(z)=\exp\left\{\int_0^{2\pi}  \frac{e^{it}+z}{e^{i\theta}-z}\, \log |\widetilde{f}(e^{it})|\, \frac{dt}{2\pi} \right\}\,.
\end{equation}
\par\medskip
Any function of the form $BS$, where $B$ is a Blaschke product and $S$ is a singular inner function is called \emph{inner function}, that is,
a function $f\in H^\infty$ with $|\widetilde{f}|=1$ a.e.
\par
It was proved by Frostman \cite{Frostman} that every inner function can be uniformly approximated by Blaschke products. Carath\'{e}od\-ory \cite{Caratheodory} showed that any function $f\in H^\infty$ with norm $\|f\|_\infty<1$ can be approximated locally uniformly by finite Blaschke products. The following result can be understood as a generalization of Carath\'{e}od\-ory's result (see \cite{Mar}).
\begin{theorem}[(D. Marshall)]\label{thm: dense}
The norm-closed convex hull of Blaschke products is $\B_{H^\infty (\D)}$.
\end{theorem}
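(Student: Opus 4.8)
The plan is to prove the two inclusions separately, the inclusion $\overline{\conv}(\mathcal B)\subseteq\B_{H^\infty}$ (where $\mathcal B$ denotes the set of Blaschke products) being the routine one: every inner function has boundary modulus $1$ a.e., so $\|B\|_\infty=1$ by the maximum modulus principle, and $\B_{H^\infty}$ is convex and norm-closed. For the reverse inclusion I would first reduce the problem twice. The dilations $f_r(z)=f(rz)$ satisfy $\|f-f_r\|_\infty=(1-r)\|f\|_\infty\to 0$ and $\|f_r\|_\infty<1$, so it is enough to place every strictly contractive $f$ in $\overline{\conv}(\mathcal B)$. Secondly, since by Frostman's theorem every inner function is a uniform, hence norm, limit of Blaschke products, one has $\overline{\conv}(\text{inner functions})\subseteq\overline{\conv}(\mathcal B)$, so it suffices to approximate each $f$ with $\|f\|_\infty<1$ in norm by convex combinations of inner functions.

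A useful tool toward this is an averaging identity over Frostman shifts. For $|w|<1$ the shift $(U-w)/(1-\overline w U)$ of an inner function $U$ is again inner (and a Blaschke product for a.e. $w$ when $U$ is, by Frostman), and expanding in a geometric series gives, for $s\in(0,1)$,
\begin{equation*}
\frac{1}{2\pi}\int_0^{2\pi}\frac{U-se^{i\phi}}{1-se^{-i\phi}U}\,d\phi=(1-s^2)\,U .
\end{equation*}
Approximating the integral by Riemann sums exhibits $(1-s^2)U$ as a norm limit of finite convex combinations of inner functions; varying $s$ and a unimodular constant then yields $cU\in\overline{\conv}(\text{inner functions})$ for every inner $U$ and every $|c|\le1$. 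Thus all contractive multiples of inner functions already lie in the hull.

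The genuinely hard step, which I expect to be the main obstacle, is to handle an arbitrary $f$ with $\|f\|_\infty<1$ rather than only the special functions $cU$. The natural device is the outer function $F$ with $|F|=\sqrt{1-|f|^2}$ on $\partial\D$ — well defined since $\log(1-|f|^2)$ is bounded — which gives the boundary relation $|f|^2+|F|^2=1$ a.e. One would like to write $f=\tfrac12(f+Fg)+\tfrac12(f-Fg)$ with $g$ inner so chosen that both halves are inner; on $\partial\D$ this forces the pointwise phase condition $\mathrm{Re}(\overline f\,Fg)=0$ a.e. Since the argument of an inner function cannot be prescribed pointwise, this cannot hold exactly, and the crux is to solve it approximately, manufacturing inner functions whose boundary phases spread out so that suitable averages reproduce $f$ in norm; this is the analytic content carried by Marshall's construction. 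In dual language the same difficulty reads as the assertion that $\sup\{\mathrm{Re}\,\Lambda(B):B\in\mathcal B\}=\|\Lambda\|$ for \emph{every} $\Lambda\in(H^\infty)^*$ — including the functionals that are not weak-star continuous, which is exactly why the norm-closed (as opposed to weak-star closed) convex hull is the delicate object. Once this approximation is established, combining it with the reductions of the first paragraph finishes the proof.
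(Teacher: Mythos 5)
Your proposal does not prove the theorem: the paper itself does not supply a proof either, but imports the result from Marshall's paper [Mar], and what you have written reproduces only the standard reductions that precede the hard part of Marshall's argument. Concretely, the easy inclusion, the reduction from Blaschke products to arbitrary inner functions via Frostman's approximation theorem, and the Frostman-shift averaging identity
\[
\frac{1}{2\pi}\int_0^{2\pi}\frac{U-se^{i\phi}}{1-se^{-i\phi}U}\,d\phi=(1-s^2)\,U
\]
are all correct (the identity follows from the geometric series expansion exactly as you say, and the Riemann-sum argument is legitimate because the integrand depends norm-continuously on $\phi$). But this only places the set $\{cU\colon U\ \mathrm{inner},\ |c|\le 1\}$ inside the closed convex hull, which is a thin slice of $\B_{H^\infty(\D)}$. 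The step you defer --- that every $f$ with $\|f\|_\infty<1$ lies in the norm-closed convex hull of inner functions --- \emph{is} Marshall's theorem; it is not a technical remainder but the entire content of the result. Your own dual reformulation ($\sup\{\mathrm{Re}\,\Lambda(B)\colon B\ \mathrm{Blaschke}\}=\|\Lambda\|$ for every $\Lambda\in(H^\infty)^*$, including non-weak-star-continuous ones) makes the depth visible, and the actual proof requires additional machinery (the Douglas--Rudin theorem on approximating unimodular functions by quotients of Blaschke products, together with a Nevanlinna-type argument producing inner solutions of interpolation problems), none of which appears in the proposal. Acknowledging that a step is hard is not the same as carrying it out, so the proof is incomplete at precisely the point where all the difficulty is concentrated.

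A secondary error: the first reduction via dilations is false as stated. It is not true that $\|f-f_r\|_\infty\to 0$ for general $f\in H^\infty$; uniform convergence of the dilations $f_r(z)=f(rz)$ to $f$ holds if and only if $f$ belongs to the disk algebra, and it fails for instance for every singular inner function and every infinite Blaschke product. The reduction to $\|f\|_\infty<1$ is still available, but by the trivial scaling $f\mapsto(1-\delta)f$, not by dilation. This slip is easily repaired and does not affect the substance of the review above: the genuine gap is the absence of the core approximation argument.
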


\subsection{The Nevanlinna class}
Let
\[
\log^{+}(x)=\left\{
\begin{array}{cc}
\log x\,, & x\geq 1\\
0\,, & 0\leq x <1
\end{array}
\right.\,.
\]
\par
The Nevanlinna class $N$, also known as the class of functions of bounded characteristic, is the family of analytic functions $f$ in the unit disk for which the integrals
\[
\int_0^{2\pi}\log^{+}|f(r e^{it})|  \frac{dt}{2\pi}
\]
are bounded for $r<1$. Since $\log^{+}|f|$ is subharmonic whenever $f$ is analytic, these integrals always increase with $r$.
\par
It was proved by F. and R. Nevanlinna (see \cite[Thm. 2.1]{D}) that an analytic function $f$ belongs to the Nevanlinna class if and only if it is the quotient of two bounded analytic functions. The importance of this result (as mentioned on \cite[p. 17]{D}) is that it allows properties of funtions in $N$ to be deduced from the corresponding properties of bounded analytic functions. For instance, every Nevanlinna function has radial limit $\widetilde{f}(e^{it})$ a.e., and $\log |\widetilde{f}(e^{it})|$ is integrable unless $f\equiv 0$.
\par
The factorization of functions in the class $N$ is the following \cite[Thm. 2.9]{D}: every function $f\not\equiv 0$ in the class $N$ can be expressed in the form $f=B[S_1/S_2]F$, where $B$ is a Blaschke product, $S_1$ and $S_2$ are singular inner functions, and $F$ is an outer function for the Nevanlinna class; this is, $F$ has the form \eqref{eq-outer} with $\log |\widetilde{f}(e^{it})|\in L^1(\partial\D)$.
\par
It is not difficult to prove that the Nevanlinna class $N$ contains the Hardy spaces $H^p$ for every $p>0$. Recall that $H^p=H^p(\D)$ is defined as the set of analytic functions $f$ in the unit disk satisfying
\[
 \sup_{0\leq r<1} \int_0^{2\pi} |f(r e^{it})|^p
 \frac{dt}{2\pi}  <\infty \,.
\]
We refer the reader to the book \cite{D} for further information about Hardy spaces (and the Nevanlinna class).
\par\smallskip
For $f \in H^\infty (\D)$ we let $\widetilde{f}\colon \partial \D \to \C$  be the corresponding boundary map as in \eqref{eq-radiallimit}.
Recall that the radial limit is defined a.e. $\theta \in [0, 2\pi)$ and that $\|f\|_{H^\infty (\D)}=\|\widetilde{f}\|_{L^\infty (\partial \D )}$ .
Also note that the operation of taking the radial limit is a multiplicative homomorphism. We will look at
function spaces on the boundary $\partial \D$. Therefore we will induce different kinds of objects acting on $\partial \D$ from the corresponding objects
acting on the disk $\D$ via the radial map and its $1$-to-$1$ correspondence in the a.e. sense. In what follows, the induced objects are distinguished by the symbol $\widetilde{\ }$ from the original ones. In this sense we will consider $H^\infty (\D) \subset L^\infty (\partial \D)$ isometrically as a
Banach subalgebra.

\subsection{Uniform algebras}

Recall that a unital commutative Banach algebra $\A$ is a \emph{uniform algebra} if 
\[ \|a^2 \| = \|a\|^2,\quad a\in \A.\]

\subsection{Douglas algebras}

\par\smallskip
Let $\A$ be a uniformly closed algebra with $H^\infty\subset \A\subset L^\infty$. For instance, consider any set $Q$ of inner functions in $H^\infty$ and take $\A=[H^\infty, \overline Q]$, the closed algebra generated by $H^\infty$ and $\overline Q$. Note that such algebra $\A$ is simply the norm closure of
\[
\left\{f \overline{b_1}^{n_1}\cdots\overline{b_k}^{n_k}\colon f\in H^\infty, b_1, \ldots, b_k\in Q  \right\}\subset L^\infty \,.
\]
\par
Any algebra of the form $[H^\infty, \overline Q]$, where $Q\subset\{u\colon u\text{ is an inner function in } H^\infty\}$ is called a \emph{Douglas algebra}.
\par
The simplest example $[H^\infty, \overline z]$ coincides with the closed linear space $H^\infty + \cc \subset L^\infty$, where $\cc$ denotes the continuous functions on the unit circle. It is known that any Douglas algebra that properly contains $H^\infty$ must contain $H^\infty + \cc$ as well (see \cite{Sa}). Moreover, as it is proved in \cite{CC}, $H^\infty + \cc=[H^\infty, \overline B]$ if and only if $B$ is a finite Blaschke product.  Note that if $Q=\{u\colon u\text{ is an inner function in } H^\infty\}$, then $[H^\infty, \overline Q]=L^\infty$.
\par\smallskip
The main theorem in \cite{Mar-acta} combined with a result by S.~Y.~Chang \cite{Chang} proves a conjecture of R. Douglas: every uniformly closed algebra $\A$,
$H^\infty \subset \A \subset L^\infty$, is generated by $H^\infty$ itself and by the set $\{u\in \A\colon u\text{ is an inner function in } H^\infty\}$. In other words, every uniformly closed subalgebra between $H^\infty$ and $L^\infty$ is a Douglas algebra.
\par

\par

\subsection{Further algebras built from $H^\infty$}
Next we will use the above ideas to gain access to more Nevanlinna class style spaces.
Suppose $\X \subset H^\infty=H^\infty(\D)$ is a subalgebra that contains some inner function. Denote by $N_\X$ the set
\[
N_{\X} := \overline{\left\{\frac{\widetilde{f}}{\widetilde{g}} \in L^\infty (\partial \D )\colon f,g\in \X,\ g\ \mathrm{inner}\right\}}\subset L^\infty=L^\infty(\partial \D).
\]
Note that, as was mentioned in the previous section, $N_{H^\infty}=L^\infty$. Also, that if $\frac{\widetilde{f_1}}{\widetilde{g_1}},\frac{\widetilde{f_2}}{\widetilde{g_2}}\in N_{\X}$ and $c\in \C$, then the operations are defined
point-wise a.e. on $\partial \D$ as follows:
\[c\frac{\widetilde{f_1}}{\widetilde{g_1}} := \frac{c\widetilde{f_1}}{\widetilde{g_1}},\quad \frac{\widetilde{f_1}}{\widetilde{g_1}}+\frac{\widetilde{f_2}}{\widetilde{g_2}}:=\frac{\widetilde{f_1} \widetilde{g_2} + \widetilde{f_2} \widetilde{g_1}}{\widetilde{g_1} \widetilde{g_2}},\quad
\frac{\widetilde{f_1}}{\widetilde{g_1}}\cdot \frac{\widetilde{f_2}}{\widetilde{g_2}}:=\frac{\widetilde{f_1} \widetilde{f_2}}{\widetilde{g_1} \widetilde{g_2}}
\in N_{\X}.\]
Also,
\[\left\|\frac{\widetilde{f_1}}{\widetilde{g_1}} + \frac{\widetilde{f_2}}{\widetilde{g_2}}\right\|_{L^\infty}\leq \left\|\frac{\widetilde{f_1}}{\widetilde{g_1}}\right\|_{L^\infty } + \left\|\frac{\widetilde{f_2}}{\widetilde{g_2}}\right\|_{L^\infty},\quad
\left\|\frac{\widetilde{f_1}}{\widetilde{g_1}} \cdot \frac{\widetilde{f_2}}{\widetilde{g_2}}\right\|_{L^\infty}\leq \left\|\frac{\widetilde{f_1}}{\widetilde{g_1}}\right\|_{L^\infty} \left\|\frac{\widetilde{f_2}}{\widetilde{g_2}}\right\|_{L^\infty }.\]
Consequently, the operations $\cdot$ and $+$ are coordinate-wise bounded bilinear operations inherited from $L^\infty$. Thus
$N_{\X}$ is a Banach algebra.
\par

\section{Some auxiliary results}

Let $\phi_a$ be the involutive automorphism of the unit disk $\D$ which interchanges the points $a\in \D$ and $0$ defined by
\begin{equation}\label{eq-autom}
\phi_a(z)=\frac{a-z}{1-\overline a z}\,,\quad z\in\D\,.
\end{equation}
It is an easy consequence of the Schwarz lemma that all automorphisms of $\D$, this is, M\"{o}bius transformations from the unit disk onto itself, have the form $\lambda \phi_a$ for some $|\lambda|=1$. Note also that any such mapping $\phi_a$ induces a bilipschitz homeomorphism $\partial \D \to \partial \D$ and that composition $f\mapsto f \circ \phi_a$ is a rotation on $N_\X$ if $\X$ is invariant under compositions from the inside with such automorphisms.

Another relevant rotation on $N_\X$ is defined by $f\mapsto \psi_\z f$, where $\z\in \partial \D$ and $(\psi_\z f)(z)=f(z\overline\z)$, $z\in \partial \D$. Of course, this extends naturally to a rotation on the whole of $L^\infty (\partial \D )$.
\subsection{A property of automorphisms of the unit disk}

We use, as usual, the following notation for the \emph{average integral operator }
\[
\psi\to \dashint_{\partial \D } \psi(\theta)  d\theta= \frac{1}{2\pi} \int_0^{2\pi} \psi(e^{i\theta}) \ d\theta\,,
\]
where $\psi$ is a measurable function on the unit circle.
\begin{lemma}\label{lm: unit_spread}
Let $\phi_a$ be the automorphism of the unit disk given by \eqref{eq-autom}. Suppose that $f\in H^\infty$ with $\|f\|_{\infty} =1$. Then
\begin{equation}\label{eq-lm:unit spread}
\sup_{a \in \D}\, \left|\,\dashint_{\partial \D } (f\circ \phi_{a})(\theta)\ d\theta \right|=1.
\end{equation}
\end{lemma}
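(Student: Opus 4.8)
The plan is to recognize that the average integral appearing in \eqref{eq-lm:unit spread} is nothing but a point evaluation of $f$ in disguise, so that the stated supremum collapses to $\sup_{a\in\D}|f(a)|=\|f\|_\infty$. The whole argument rests on the mean value property for bounded analytic functions.

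First I would record the structural observation that, since $|a|<1$, the M\"obius map $\phi_a$ extends analytically across $\partial\D$ and carries $\partial\D$ homeomorphically onto itself. Consequently $f\circ\phi_a$ is again a bounded analytic function on $\D$ with $\|f\circ\phi_a\|_\infty=\|f\|_\infty=1$, so $f\circ\phi_a\in H^\infty$, and its radial boundary trace agrees a.e.\ with $\widetilde f\circ\phi_a$. This lets me interpret the displayed integral as the mean of the boundary values of the $H^\infty$ function $f\circ\phi_a$.

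The key step is the mean value identity: for any $g\in H^\infty$ one has
\[
\dashint_{\partial \D}\widetilde g(\theta)\,d\theta=g(0),
\]
which is immediate from the Cauchy integral formula (equivalently, it extracts the zeroth Taylor/Fourier coefficient of $g$). Applying this with $g=f\circ\phi_a$ and using $\phi_a(0)=a$ yields
\[
\dashint_{\partial \D}(f\circ\phi_a)(\theta)\,d\theta=(f\circ\phi_a)(0)=f(a).
\]
Taking the modulus and then the supremum over $a\in\D$ gives
\[
\sup_{a\in\D}\left|\,\dashint_{\partial \D}(f\circ\phi_a)(\theta)\,d\theta\right|=\sup_{a\in\D}|f(a)|=\|f\|_\infty=1,
\]
which is exactly \eqref{eq-lm:unit spread}.

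I do not expect a serious analytic obstacle. The only point demanding care is the bookkeeping between a function on the disk and its boundary trace: namely, justifying that integrating the boundary function of $f\circ\phi_a$ coincides with integrating $f$ along the image circle, and that the mean value property is preserved under passage to radial limits. Both are standard consequences of the a.e.\ $1$-to-$1$ correspondence between $H^\infty$ functions and their boundary maps recalled in the preliminaries, so the verification is routine.
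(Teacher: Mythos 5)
Your proposal is correct and follows essentially the same route as the paper: reduce the boundary average of $f\circ\phi_a$ to the point value $f(a)$ via the mean value property, then take the supremum over $a\in\D$ to recover $\|f\|_\infty=1$. The only cosmetic difference is that the paper justifies the mean value identity for the boundary trace by dilating ($(f\circ\phi_a)_r$) and applying dominated convergence, whereas you invoke the Cauchy integral formula / zeroth Fourier coefficient directly, which amounts to the same standard fact.
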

\begin{proof}
We are to check that 
\begin{equation}\label{eq-lm:unit spread2}
\sup_{a \in \D} \left|\frac{1}{2\pi}\int_0^{2\pi} (f\circ\phi_{a})(e^{i\theta}) \ d\theta \right|=1.
\end{equation}
For a real positive number $0<r<1$, we denote by $(f\circ\phi_{a})_r$ the \emph{dilation function}
\[
(f\circ\phi_{a})_r(z)=(f\circ\phi_{a})(rz)\,,\quad z\in\D\,.
\]
Using the fact that both $f$ and $\phi_a$ are analytic in the unit disk, we get that $(f\circ\phi_{a})_r$ are analytic functions (hence harmonic) in the closed unit disk for each all such values of $r$. Therefore, using the mean value property we get
\begin{equation*}\label{eq-integralautom}
\frac{1}{2\pi}\int_0^{2\pi} (f\circ\phi_{a})_r(e^{i\theta}) \ d\theta = (f\circ\phi_{a})_r(0)=f(a)\,,
\end{equation*}
which gives, by the application of the dominated convergence theorem,
\begin{equation*}\label{eq-integralautom}
\frac{1}{2\pi}\int_0^{2\pi} (f\circ\phi_{a})(e^{i\theta}) \ d\theta= f(a)\,.
\end{equation*}
This readily implies \eqref{eq-lm:unit spread2} (and then \eqref{eq-lm:unit spread} as well) since
\[
1=\|f\|_{\infty} =\sup_{a\in\D} |f(a)|\,.
\]
\end{proof}

\subsection{Abstract harmonic analysis tool}

\begin{proposition}\label{lm: unit_average}
Suppose that $f \in L^\infty(\T)$ with $\|f\|_{L^\infty} =1$. Assume also that $\dashint_{\T} f = s$ where $0\leq s \leq 1$. Then
\[s \1_{\T} \in \overline{\conv}^{\omega^*} (\psi_\zeta f\colon \zeta \in \T )\subset L^\infty (\T)\]
where $\psi_\zeta (f) (z) = f(z\overline\z)$, $\zeta, z\in \T$.
\end {proposition}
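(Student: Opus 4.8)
The plan is to realize the constant function $s\1_\T$ as the \emph{barycenter} of the orbit $\{\psi_\zeta f : \zeta \in \T\}$ with respect to normalized Haar (arc‑length) measure on the circle, and then to invoke the general principle that such a barycenter must lie in the weak‑star closed convex hull of the orbit. In short, averaging a function over all its rotations produces its mean value as a constant, and averaging is a convexity operation in the weak‑star sense.

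First I would record the topological setting. Each $\psi_\zeta$ is an isometry of $L^\infty(\T)$ and $\|f\|_{L^\infty}=1$, so the whole orbit sits inside the unit ball of $L^\infty(\T)$, which is weak‑star compact by Banach--Alaoglu; hence $K:=\overline{\conv}^{\omega^*}(\psi_\zeta f:\zeta\in\T)$ is a weak‑star compact convex set. Next I would verify that the orbit map $\zeta\mapsto\psi_\zeta f$ is weak‑star continuous: for fixed $g\in L^1(\T)$ we have $\dashint_\T f(z\overline\zeta)g(z)\,dz=\dashint_\T f(w)g(w\zeta)\,dw$, which depends continuously on $\zeta$ because $f$ is bounded and translation is norm‑continuous on $L^1(\T)$. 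Thus the map is weak‑star measurable and bounded, so the Gelfand--Pettis integral $I:=\dashint_\T \psi_\zeta f\,d\zeta$ is a well‑defined element of $L^\infty(\T)$, determined by $\langle I,g\rangle=\dashint_\T\langle\psi_\zeta f,g\rangle\,d\zeta$ for every $g\in L^1(\T)$.

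I would then identify $I$ explicitly. Testing against $g\in L^1(\T)$ and using Fubini,
\begin{equation*}
\langle I,g\rangle=\dashint_\T\Bigl(\dashint_\T f(z\overline\zeta)\,d\zeta\Bigr)g(z)\,dz,
\end{equation*}
and by the invariance of Haar measure the inner integral equals $\dashint_\T f=s$ for every $z\in\T$. Hence $\langle I,g\rangle=s\,\dashint_\T g=\langle s\1_\T,g\rangle$ for all $g$, which forces $I=s\1_\T$. It remains to see that $I\in K$. This is the barycenter principle, which I would prove by Hahn--Banach separation: were $I\notin K$, then since $K$ is weak‑star closed and convex there would be a weak‑star continuous functional, i.e. some $g\in L^1(\T)$, and a constant $c$ with $\mathrm{Re}\,\langle I,g\rangle>c\ge\mathrm{Re}\,\langle k,g\rangle$ for all $k\in K$; integrating the latter inequality over the orbit gives $\mathrm{Re}\,\langle I,g\rangle=\dashint_\T\mathrm{Re}\,\langle\psi_\zeta f,g\rangle\,d\zeta\le c$, a contradiction. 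Therefore $s\1_\T=I\in K$, which is the assertion.

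I expect the only genuine subtlety to be the clean justification of the vector‑valued integral together with the barycenter inclusion; both rest on the weak‑star compactness of the ball and on Hahn--Banach separation, and the hypothesis $\|f\|_{L^\infty}=1$ enters precisely to keep the orbit in a weak‑star compact set (the restriction $0\le s\le1$ is not needed for this argument). A concrete alternative that avoids the abstract integral is to replace $\dashint_\T\psi_\zeta f\,d\zeta$ by the finite averages $\frac1n\sum_{k=0}^{n-1}\psi_{\zeta_k}f$ with $\zeta_k=e^{2\pi i k/n}$, which manifestly lie in $\conv(\psi_\zeta f:\zeta\in\T)$; the remaining task is then to show these converge weak‑star to $s\1_\T$, which reduces to the $L^1$‑norm convergence of the translation averages $\frac1n\sum_{k}g(\cdot\,\zeta_k)$ to the constant $\bigl(\dashint_\T g\bigr)\1_\T$, established first for $g\in\cc$ by equicontinuity and then for general $g\in L^1(\T)$ by density, since the averaging operators are $L^1$‑contractions.
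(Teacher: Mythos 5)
Your proof is correct, but it takes a genuinely different route from the paper's. You work on the primal side: you form the Gelfand (weak-star) integral $I=\dashint_{\T}\psi_\zeta f\,d\zeta$ of the orbit against Haar measure, identify $I=s\1_{\T}$ by Fubini and translation invariance, and then place this barycenter inside $\overline{\conv}^{\omega^*}(\psi_\zeta f\colon\zeta\in\T)$ by a one-line Hahn--Banach separation; your discrete variant with the Riemann-sum averages $\frac1n\sum_{k}\psi_{\zeta_k}f$ is an equally valid and even more elementary implementation of the same idea. The paper instead argues entirely by contradiction on the dual side: it separates $s\1_{\T}$ from the hull by a weak-star continuous functional $F\in L^1(\T)$, averages $F$ over finite dyadic subgroups via the adjoints $T_n^*$, extracts a weak-star cluster point $F_0$ of $(T_n^*F)$ in $L^\infty(\T)^*$ viewed as a finitely additive measure, and then needs a uniform-integrability argument to show that $F_0$ is absolutely continuous and rotation-invariant, hence a constant multiple of Lebesgue measure, before reaching a contradiction. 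Your argument is shorter, avoids the excursion into finitely additive measures and cluster points, makes transparent where the weak-star topology enters (the averages converge only weak-star, not in norm), and extends verbatim to any compact group whose orbit map is weak-star continuous, which bears on the question the authors raise right after the proposition; you are also right that the normalization $0\le s\le 1$ is never used. What the paper's dual-side formulation buys is a direct link to the counterexample in its final section, where a rotation-invariant functional that is not weak-star continuous is constructed to show why the separating functional must be taken in $L^1$; but for the proposition itself your proof is complete as it stands.
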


The above statement is rather easy to see for exponents $1\leq p<\infty$ but the case $ p=\infty$, treated below, is more complicated. We do not know if the statement holds for norm topology in place of the weak-star topology.
We note that if it does hold, then our main result (Theorem~\ref{thm: main} below) in the case of $\A=L^\infty$ can be improved to the full convex-transitive case.
Also, if this improvement is possible, the next natural question is whether a similar conclusion holds for a general compact group (or an amenable locally compact group in the weak-star setting) with the Haar measure.

\begin{proof}
Fix $f$ and $s$ as above and assume to the contrary that
\[s \1_{\T} \notin \overline{\conv}^{\omega^*} (\psi_\z f\colon \z\in \T )\subset L^\infty (\T). \]
Then there exists by the Hahn-Banach theorem a functional $F \in L^\infty (\T )^*$, $\|F\|=1$, such that
\[\sup F( \overline{\conv}^{\omega^*} (\psi_\z f\colon \z\in \T ))< F(s \1_{\T} ).\]
Moreover, we may pick $F$ to be weak-star continuous above by using the Hahn-Banach separation on locally convex spaces (see e.g. \cite[Thm. 4.25]{fhhmz}), in this case for the weak-star topology.

Denote by $G_n$, where $n$ is a positive integer, the finite cyclic subgroup of $\T$ generated by $e^{i\frac{2\pi}{2^n}}$ (in $\T$).
Define linear operators
$T_n \colon L^\infty (\T) \to L^\infty (\T )$ by
\[(T_n f)(z) = \frac{1}{\# G_n}\sum_{g\in G_n} f(gz).\]
Clearly $T_n (\1_{\T})=\1_{\T}$ and $\|T\|=1$. The adjoint operators $T_{n}^* \colon L^\infty (\T )^* \to L^\infty (\T )^*$ are given by
\[(T_{n}^* x^*)[x] = x^* (T_n x),\quad x\in L^\infty (\T ),\ x^* \in L^\infty (\T )^* .\]

By the Banach-Alaoglu theorem, $\B_{L^\infty (\T )^*}$ is weak-star compact.
Thus the sequence $T_{n}^* F$ has a weak-star cluster point, say $F_0$.
This, of course, need not be unique, a priori. Note that $F_0 (\1_{\T})=F(\1_{\T})$ by the construction.

As an element of the dual of $L^\infty (\T )$ the element $F_0$ can be seen as a finitely additive signed measure $\Sigma\to \C$ with finite variation, $\|F_0 \|$, see e.g. \cite[IV.8.16]{DS}.

However, note that $F_0$ has the property that for all $n\geq 1$,
\[F_0 x = F_0 T_n x\,.\]
Thus, by using the finite additivity of the measure we get that for any dyadic decomposition of the form
\[
\bigcup_{k=0}^{2^n -1} D_{k,n} = \T\,,\quad \text{where}\quad
D_{k,n}=\left[e^{ik\frac{2\pi}{2^n}},e^{i(k+1)\frac{2\pi}{2^n}}\right)\,,\quad k=0,\ldots 2^n-1\,,\]
we have $F_0 (\1_{D_{i,n}} \cdot) = F_0 (\1_{D_{j,n}} \cdot)$ for any $0\leq i,\ j\leq 2^n-1$.
In particular, $\|F_0 (\1_{D_{i,n}} \cdot) \|$ depends only on $n$.

By applying an outer measure approximation of measurable sets we see that
\[\frac{m(M)}{2\pi} \|F_0\| =\|F_0 (\1_{M} \cdot)\|\]
for general measurable sets $M \subset \T$. Indeed, for each such set $M$ and any $\varepsilon>0$, there is
a positive integer $n$ and there are finitely many $n$-th level dyadic intervals $D_{i,n}$ as above such that
\[m(M\ \Delta\ \bigcup_i D_{i,n} )< \varepsilon\,,\]
where $\Delta$ denotes the symmetric difference between sets, that is, $A\Delta B= (A\setminus B) \cup (B\setminus A)$.

On the other hand, it follows from the weak-star continuity of $F$ that
\[\limsup_{m(E)\to 0}\|F (\1_{E} \cdot)\|=0\]
($F\in L^1 (\T)$) and since the above limit is uniform and
\[\|T_{n}^* F (\1_{E} \cdot)\| \leq \sup_{m(E')=m(E)} \| F (\1_{E'} \cdot)\|\,,\]
we obtain
\[\limsup_{m(E)\to 0}\| F_0 (\1_E \cdot)\|=0.\]

Clearly $F_0$ is an absolutely continuous $\sigma$-additive signed measure $\Sigma \to \C$.
Thus we may regard $F_0 \in L^{1}(\T)$ via the Radon-Nikodym derivative.

By a standard argument employing Lusin's theorem, we can see that $\|f-\psi_\z (f) \|_1 \to 0$
as $\T\ni \z\to 1$. Since $T_{n}^* F_0 =F_0$ for $n\geq 1$ we obtain that $F_0 = \psi_\z F_0$ for every
$\z\in \T$ (up to a modification in a null set). That is, $F_0 = c \1_{\T}\in L^{1}(\T)$
can be regarded as a constant function.

The constant is given by $c=F(\1_{\T})$, so that $F_0$ is in fact unique and $T_{n}^* F \stackrel{\omega^*}{\longrightarrow} F_0$.

Observe that
\[\sup_{|z|=1} F(\psi_z f)\geq F_0 (f)=c\dashint_{\T} f =cs\]
and
\[F(\1_{\T})=(T_{n}^* F)(\1_{\T})\to F_0 (\1_{\T})=cs.\]
Thus
\[\sup_{|\z|=1} F(\psi_\z f)\geq F(\1_{\T})=cs,\]
contradicting the counter assumption. This concludes the argument.
\end{proof}

\section{Main results}

\begin{theorem}\label{thm: main}
Let $\A$ be a Douglas algebra, $H^\infty \subset \A \subset L^\infty$, which is invariant under compositions from the inside with automorphisms of the unit disk.
Then the following conditions are equivalent:
\begin{enumerate}
\item The Douglas algebra is not minimal, i.e. $\A\neq H^\infty$,
\item $\A$ is $\omega^*$-convex-transitive,
\item $\A$ is $\omega^*$-convex-transitive with respect to the subgroup of weighted composition operators of the type $\widetilde{f}\mapsto \widetilde{\psi} \widetilde{(f\circ \phi)}$ where $\widetilde{\psi}\in A$ is a.e. unimodular and $\phi$ is an automorphism of $\D$.
\end{enumerate}
\end{theorem}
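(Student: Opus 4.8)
The plan is to prove the cycle of implications $(1)\Rightarrow(3)\Rightarrow(2)\Rightarrow(1)$. The implication $(3)\Rightarrow(2)$ is immediate, since the orbit of a point under the prescribed subgroup of weighted composition operators is contained in its orbit under the full isometry group, so the weak-star closed convex hulls are nested. For $(2)\Rightarrow(1)$ I would argue contrapositively: if $\A=H^\infty$, then by \eqref{eq: char} every surjective isometry has the form $f\mapsto\beta\,(f\circ\phi_a)$ with $|\beta|=1$ and $\phi_a$ as in \eqref{eq-autom}, so the orbit $\mathcal{G}_{H^\infty}(z)$ of the inner function $z$ consists of the M\"obius maps $\beta\phi_a$. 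A direct expansion shows the second Taylor coefficient of $\beta\phi_a$ has modulus $|a|(1-|a|^2)\le\tfrac{2}{3\sqrt3}<1$, whereas $z^2\in\S_{H^\infty}$ has second coefficient of modulus $1$. Since the functional given by the second Fourier coefficient is weak-star continuous (being integration against a bounded function) and linear, its modulus stays $\le\tfrac{2}{3\sqrt3}$ on $\overline{\conv}^{\omega^*}(\mathcal{G}_{H^\infty}(z))$, so $z^2$ lies outside this set and $H^\infty$ is not $\omega^*$-convex-transitive.

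The heart of the theorem is $(1)\Rightarrow(3)$. Writing $\mathcal G$ for the subgroup of operators of the prescribed form, I would single out the constant $\1_\T$ as a \emph{hub} and prove two statements: (i) $\1_\T\in\overline{\conv}^{\omega^*}(\mathcal G(x))$ for every $x\in\S_\A$, and (ii) $\overline{\conv}^{\omega^*}(\mathcal G(\1_\T))=\B_\A$. Granting both, Lemma~\ref{lm: tri} finishes the argument: for arbitrary $x\in\S_\A$ and any $z\in\B_\A=\overline{\conv}^{\omega^*}(\mathcal G(\1_\T))$, relation (i) and Lemma~\ref{lm: tri} give $z\in\overline{\conv}^{\omega^*}(\mathcal G(x))$, the reverse inclusion being trivial. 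Throughout I would use that, because $\A\neq H^\infty$, $\A$ contains $H^\infty+\cc$, so every finite Blaschke product $B$ is a \emph{unimodular invertible} element of $\A$ (both $B,\overline B\in\A$); hence $M_B$ is a surjective isometry of the prescribed form, and likewise for the inner factors generating $\A$.

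For (ii) I would use the description of $\A$ as the norm closure of the set of single monomials $f\overline B$ with $f\in H^\infty$ and $B$ a finite product of the inner generators in $Q$; such $B$ is inner with $\overline B\in\A$, so $M_B\in\mathcal G$. If $\|f\overline B\|_\infty\le1$ then $\|f\|_\infty\le1$, and $M_B(f\overline B)=f$ gives $f\overline B=M_B^{-1}(f)$. Carath\'eodory's theorem quoted above yields finite Blaschke products converging to any $f$ with $\|f\|_\infty<1$ locally uniformly, hence weak-star in $L^\infty$; since these Blaschke products lie in $\mathcal G(\1_\T)$, one gets $\B_{H^\infty}\subset\overline{\conv}^{\omega^*}(\mathcal G(\1_\T))$. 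As this set is weak-star closed and $\mathcal G$-invariant (the operators $M_\psi$ and $C_\phi$ are weak-star continuous), applying $M_B^{-1}$ shows every monomial in the unit ball lies in it; a routine normalization makes these monomials norm-dense in $\B_\A$, and norm-closedness of the weak-star closed convex set gives (ii). (Here Theorem~\ref{thm: dense} can replace Carath\'eodory if one prefers to argue with all Blaschke products.)

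For (i) I would first treat $f\in H^\infty$ with $\|f\|_\infty=1$. Choosing $a_n$ with $|f(a_n)|\to1$, Lemma~\ref{lm: unit_spread} gives $\dashint_{\partial\D}(f\circ\phi_{a_n})=f(a_n)$; after multiplying by a unimodular constant I may assume this average is $s_n:=|f(a_n)|\in[0,1]$ while the sup-norm remains $1$. Proposition~\ref{lm: unit_average} then places $s_n\1_\T$ in $\overline{\conv}^{\omega^*}(\psi_\zeta(f\circ\phi_{a_n}):\zeta\in\T)\subset\overline{\conv}^{\omega^*}(\mathcal G(f))$, and letting $s_n\to1$ yields $\1_\T\in\overline{\conv}^{\omega^*}(\mathcal G(f))$. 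For a monomial $x=f\overline B$ the identity $M_B(x)=f$ reduces matters to this case, and for general $x\in\S_\A$ a norm approximation by such monomials $x_n$, together with the isometry estimate $\conv(\mathcal G(x_n))\subset\conv(\mathcal G(x))+\|x_n-x\|\B_\A$ and weak-star compactness of $\B_\A$, transfers the hub relation to $x$. The main obstacle I expect is the careful bookkeeping of the weak-star topology in (i) and (ii): verifying that each operator used genuinely belongs to the prescribed subgroup and is weak-star continuous, and that the various closed convex hulls survive the limiting and group operations. This is precisely where Proposition~\ref{lm: unit_average} and the invertibility of the unimodular generators of $\A$ carry the weight of the argument.
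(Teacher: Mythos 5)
Your proposal is correct and its core follows the same architecture as the paper's proof: the constant $\1_{\partial \D}$ serves as a hub, your step (i) is carried out exactly as in the paper --- reduce to a unit-norm $f\in H^\infty$ by multiplying away the inner denominator, use Lemma~\ref{lm: unit_spread} to make the average $\dashint f\circ\phi_{a_n}=f(a_n)$ close to $1$ in modulus, then invoke Proposition~\ref{lm: unit_average} and Lemma~\ref{lm: tri}. You diverge in two places. For $(2)\Rightarrow(1)$ the paper simply notes that the rotations \eqref{eq: char} of $H^\infty$ preserve the one-dimensional $\omega^*$-closed subspace of constants; your second-Taylor-coefficient functional on the orbit of $z$ works equally well but is more computational. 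For step (ii) the paper does not pass through the monomials $f\overline{B}$: it multiplies $\1_{\partial\D}$ by the unimodular elements of $\cc\subset\A$, applies the Russo--Dye theorem to capture $\B_{\cc}$ in the norm-closed convex hull, and then shows $\overline{\B_{\cc}}^{\omega^*}=\B_{L^\infty}$ by mollification; Marshall's theorem is reserved for the (explicitly superfluous) case $\A=L^\infty$. Your route via Carath\'eodory and the invertible unimodular generators is a legitimate alternative, more in the spirit of the ``inner isometries'' theme, but two details need care: (a) the parenthetical suggestion to substitute Theorem~\ref{thm: dense} with \emph{all} Blaschke products fails for $\A\neq L^\infty$, since an infinite Blaschke product $B$ need not satisfy $\overline{B}\in\A$, so $M_B$ is then not a surjective isometry of $\A$ and $B\notin\mathcal{G}_{\A}(\1_{\partial\D})$ (the finite Blaschke products of Carath\'eodory, whose conjugates lie in $\cc\subset\A$, are the right tool, as in your primary argument, and one should also record that bounded locally uniform convergence in $\D$ implies $\omega^*$-convergence of boundary values); and (b) $\B_{\A}$ is in general \emph{not} weak-star compact, because a Douglas algebra need not be $\omega^*$-closed in $L^\infty$, so the final approximation step should be run with $\B_{L^\infty}$ instead: one has $\conv(\mathcal{G}_{\A}(x_n))\subset\overline{\conv}^{\omega^*}(\mathcal{G}_{\A}(x))+\|x_n-x\|\,\B_{L^\infty}$ with a weak-star closed right-hand side, whence $\1_{\partial\D}$ lies at zero norm-distance from the norm-closed set $\overline{\conv}^{\omega^*}(\mathcal{G}_{\A}(x))$. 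Both slips are local and repairable; the overall argument stands.
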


Remarks: It is known that $L^\infty$  is convex-transitive as a Banach space.
The point here is that the $\omega^*$-convex-transitivity holds even if we restrict to a much smaller isometry group, which, in addition to being multiplicative, splits to very particular type of isometries motivated by function theory. Recall that we denote by $\omega^*$ the locally convex topology $\sigma(\A,L^1 )$
which is the weak-star topology in the case that $\A=L^\infty$.

\begin{proof}
The rotations of $H^\infty$ are of the type \eqref{eq: char}, which clearly map the constant functions to constant functions.
It is easy to see that the subspace of constant functions in $L^\infty$ is a $1$-dimensional $\omega^*$-closed subspace.
Thus $H^\infty$ is not $\omega^*$-convex-transitive.

The rest of the argument we will study Douglas algebras $\A\supsetneq H^\infty$. As it was mentioned before, in such a case $\A$ contains $\cc$, the continuous functions on $\partial \D$, as a subspace. To prove the statement it suffices to show the $\omega^*$-convex-transitivity of $\A$ with respect to the \emph{special} isometry subgroup, which is denoted by $\mathcal{G}_\A$, in what follows.

We will achieve this in two main steps. First, we will show that
for any $x\in \S_{\A}$ the unit function $\1_{\partial \D}$ is contained in
$\overline{\conv}^{\omega^*}(\mathcal{G}_{\A}(x))$. Second, we check that for any $y \in \S_{\A}$ we
have that $y\in \overline{\conv}^{\omega^*}(\mathcal{G}_{\A}(\1_{\partial \D}))$. Then a standard argument  (see Lemma \ref{lm: tri}) yields that $y\in \overline{\conv}^{\omega^*}(\mathcal{G}_{\A}(x))$ for any $x\in \S_{\A}$ and the
$\omega^*$-convex-transitivity of the space follows.
\par\smallskip
First step. Let $x\in \S_{\A}$. We will show that $\1_{\partial \D} \in \overline{\conv}^{\omega^*}(\mathcal{G}_{\A}(x))$.
Let $\varepsilon >0$. Then by the construction of the Douglas algebra there is $x_0 = \frac{\widetilde{f}}{\widetilde{g}}$ with
$f,g \in H^\infty$ and $g$ inner, such that $\|x-x_0 \|<\varepsilon$. Thus, it actually suffices to show that
$\1_{\partial \D} \in \overline{\conv}^{\omega^*}(\mathcal{G}_{\A}(x_0 ))$.

It is easy to see that the multiplication of functions in $\A$ by inner functions $\widetilde{g} $ such that $\overline{\widetilde{g}} \in \A$
induces a rotation on $\A$. Therefore we may write $y=T(x_0 )$, $T\in \mathcal{G}_{\A}$, such that $y$ is the boundary value of an analytic function $f\in \S_{H^\infty (\D)}$.

According to Lemma \ref{lm: unit_spread} we have that
\[\sup_{a \in \D}\, \left|\,\dashint_{\partial \D } f\circ \phi_{a}(\theta)\ d\theta \right|=1\]
and therefore we obtain sequences $a_n \in \D$ and $c_n \in \partial \D$ such that
\[\lim_{n\to\infty} c_n \dashint_{\partial \D}    f\circ \phi_{a_n}(\theta)\  d\theta =1.\]

Observe that composition of functions in $\A$ from the inside by M\"obius transformations $\phi$ of the unit disk onto itself induces
rotations on $\A$. Indeed, if $f,g\in H^\infty (\D)$, then the essential supremum of the modulus of boundary values
$\frac{\widetilde{f \circ \phi}}{\widetilde{g \circ \phi}}$ coincides with that of $\frac{\widetilde{f}}{\widetilde{g}}$, since
$\phi$ induces a bilipschitz transform of the boundary onto itself.
We conclude that
\[\sup S \{Tx\colon T \in \mathcal{G}_{\A}\}=1\,,\]
where $S\colon \A\to \R$ is the average integral operator.

Thus, Proposition \ref{lm: unit_average} yields
\[\1_{\partial \D} \in \overline{\conv}^{\omega^*} (Rx\colon R\in \mathcal{G}_{\A})\,,\]
where we use rotations of the disk composed both from the inside and outside.
\par\smallskip
Second step. We first treat the case $H^\infty \subsetneq \A \subseteq L^\infty$, case a), and then give a superfluous argument for the case with
$\A=L^\infty$, case b).
\medskip

{\it Case a).} Note that if $v \in \cc$ is unimodular, then $\overline{v} \in \cc$ is unimodular as well. Moreover, multiplication
from the outside by $v$ induces a rotation on $\A$. It is known that the norm closed convex hull of unimodular functions of $\cc$ is $\B_\cc$.
This can be verified for example by using the Russo-Dye theorem.

Next we show that $\overline{\B_{\cc}}^{\omega^*} = \B_{L^\infty (\partial \D)}$.
Indeed, by mollifying any $z \in L^\infty (\partial \D)$ we can approximate it in the $L^1$-sense by a sequence $(z_n)$ of continuous functions
with $\|z_n \|_\infty \leq \|z\|_\infty$. In particular $z_n \to z$ in measure and we see easily that
\[\int_{\partial \D} h (z_n - z) \to 0\]
as $n\to\infty$ for any $h\in L^1 (\partial \D)$. Thus $z_n \stackrel{\omega^*}{\longrightarrow} z$ and hence
$\overline{\B_{\cc}}^{\omega^*} = \B_{L^\infty }$.
It follows that $\B_\A =\overline{\conv}^{\omega^*} (\mathcal{G}_\A (\1))$.
\medskip

{\it The case b)} ($\A= L^\infty$). Fix $\frac{\widetilde{f}}{\widetilde{g}}\in \S_{\A}$, where $g$ is inner. Let us denote by $I_1$ and $I_2$ the radial parts of some given inner functions. Note that multiplication with functions of the form $y \mapsto \frac{I_1}{I_2}y$ defines a rotation on $\A$.
The fact that this operator is linear is clear, it is isometric by virtue of the properties of inner functions
and invertibility follows by noting that $\frac{I_2}{I_1}$ defines also a linear isometry.
First note that
\begin{equation}\label{eq: fg}
\frac{\widetilde{f}}{\widetilde{g}}\in \mathcal{G}_{\A}(\widetilde{f}).
\end{equation}
By  applying Theorem \ref{thm: dense} we obtain that
\[\widetilde{f} \in \overline{\conv}(\widetilde{B}\colon\ B\ \mathrm{Blaschke\ product}),\]
which reads
\[\widetilde{f} \in \overline{\conv}(\mathcal{G}_{\A}(\1_{\partial \D})),\]
since in the case of the maximal Douglas algebra the multiplications from the outside by Blaschke products induce rotations.
Thus, by \eqref{lm: tri} we conclude that
\[\frac{\widetilde{f}}{\widetilde{g}} \in \overline{\conv}(\mathcal{G}_{\A}(\1_{\partial \D})).\]

By combining the two steps the principle in Lemma~\ref{lm: tri} yields that $\A$ is $\omega^*$-convex-transitive with respect to the special subgroup of rotations.
\end{proof}
Note that both the smallest Douglas algebra that properly contains $H^\infty$, this is $H^\infty+\cc$, and the maximal one, $L^\infty$, are invariant under compositions from the inside with surjective M\"obius transformations of the disk. Not every Douglas algebra satisfies this condition (see \cite[Cor. 6]{GGG}). However, in the next examples we show two Douglas algebras $\A_1$ and $\A_2$ satisfying the hypotheses in Theorem~\ref{thm: main} and with $H^\infty+\cc \varsubsetneq \A_i \varsubsetneq L^\infty$, $i=1,2$.

\begin{example}
An analytic function $f\in H^\infty$ with $\|f\|_\infty=1$ is said to have angular derivative at $\z\in\partial \D$ if there exists a point $\omega\in\partial\D$ such that $(f(z)-\omega)/(z-\z)$ has finite non-tangential limit as $z\to\z$ (see \cite[p. 50]{CoMc}). Note that if $f$ is analytic at $z=\zeta$ and $|f(\zeta)|=1$, then $f$ has angular derivative at $\zeta$. By using the Julia-Carath\'{e}odory theorem \cite[p. 51]{CoMc}, it is easy to check that $f$ has angular derivative at every point on $\partial\D$ if and only if for every automorphism $\phi$ of the unit disk the function $f\circ\phi$ also has angular derivative on $\partial\D$.
\par
Consider
\[
Q_1=\{b\colon b\text{ is a Blaschke product with angular derivative at every point on } \partial\D\}
\]
and let $\A_1=[H^\infty, \overline{Q_1}]$. Since both $H^\infty$ and $Q_1$ are invariant under composition with surjective M\"obius transformations from the inside, so is $\A_1$.
\par
Theorem 11 and Example 1 in \cite{GGG} directly prove that $\A_1 \varsubsetneq L^\infty$. Now, define the sequence $\{a_n\}$ of points in the unit disk by
\[
a_n=\frac{1}{n^2+1}+\frac{n^2e^{\frac in}}{n^2+1}\,,\quad n=1\,, 2\,,\ldots
\]
and consider the corresponding (infinite) Blaschke product $b_1$ as in \eqref{eq-Blaschkeprod} with zeros $\{a_n\}$.
\par
Since $\zeta=1$ is the only set of accumulation points of $\{a_n\}$, by \cite[Ch. II, Thm. 6.1]{G} we have that $b_1$ extends to be analytic on the complement of
\[
\{1\}\cap \{1/\overline{a_n}\colon n=1, 2, \ldots\}\,.
\]
Hence, $b_1$ has angular derivative at $\z$ for all $\z\neq 1$. To prove that $b_1$ has angular derivative at $\zeta=1$ as well, we use Frostman's theorem \cite{Frostman2} which says that a Blaschke product with zeros $\{a_n\}$ has angular derivative at $\z=1$ if and only if
\begin{equation}\label{eq-frostman}
\sum_{n=0}^\infty \frac{1-|a_n|^2}{|1-a_n|^2} <\infty\,.
\end{equation}
A straightforward calculation shows that $\frac{1-|a_n|^2}{|1-a_n|^2}=1/n^2$. This proves that $b_1\in Q_1$, hence $[H^\infty, \overline{b_1}]\subset \A_1$. Since $H^\infty+\cc=[H^\infty, \overline{b}]$ if and only if $b$ is a finite Blaschke product (see \cite{CC}) we conclude that $H^\infty+\cc \varsubsetneq \A_1$.
\end{example}

\begin{example}
For $z, w\in\D$, we let
\[
\rho(z,w)=\left|\frac{z-w}{1-\overline w z}\right|
\]
be the \emph{pseudo-hyperbolic distance} of $z$ and $w$. It is easy to check that for any automorphism $\phi$ of the disk the following equality holds for any pair of points $z, w$ in $\D$:
\[
\rho(\phi(z), \phi(w))=\rho(z,w)\,.
\]
\par
A infinite Blaschke product of the form \eqref{eq-Blaschkeprod} is called an \emph{interpolating Blaschke product} if its zero set $\{a_n\}_{n=1}^\infty$ satisfies
\[
\inf_n \prod_{n\neq m} \rho(a_n, a_m)=\delta >0\,.
\]
In the case when
\[
\lim_{n\to\infty} \prod_{n\neq m} \rho(a_n, a_m)=1\,,
\]
we say that the Blaschke product is \emph{thin} (or sparse). Note that $B$ is a thin Blaschke product if and only if $B\circ \phi$ is a thin Blaschke product for any surjective M\"obius transformation $\phi$ of the disk.
\par

Define $Q_2=\{b\colon b\text{ is a thin Blaschke product}\}$ and consider $\A_2=[H^\infty, \overline{Q_2}]$. Again, both $H^\infty$ and $Q_2$ are invariant under compositions with surjective M\"obius transformations from the inside, hence, so is $\A_2$. This algebra $\A_2$ was analyzed in \cite{H} where the author proves the equality $\A_2=H^\infty +\bb$, $\bb$ being  the smallest (closed) $C^*$ subalgebra of $L^\infty$ containing $Q_2$; that is $\bb=[Q_2,\overline{Q_2}]$. Note that this directly proves that $H^\infty+\cc \varsubsetneq \A_2$, however, we can give another proof of this latter fact as follows.
\par
Consider the Blaschke product $b_2$ with zeros at the points $a_n=1-1/2^{2^n}, n=1, 2, \ldots$ Since
\[
\lim_{n\to\infty} \frac{1-|a_{n+1}|}{1-|a_n|}=0\,,
\]
we get that $b_2$ is a thin Blaschke product (see the comment after Proposition~1.1 in \cite{GM}) and therefore, $[H^\infty, \overline{b_2}]\subset \A_2$.
\par
On the other hand, the series in \eqref{eq-frostman} for our choice of $\{a_n\}$ diverges. Thus, $b_2$ is an (infinite thin) Blaschke product which does not have angular derivative at $\z=1$.
\par
Bearing in ming that $b_2$ is an infinite Blaschke product, we can argue as in the previous example to get that $H^\infty+\cc \varsubsetneq \A_2$.
\par\smallskip
It was proved in \cite{H} that any Blaschke product $b$ with $\overline{b}\in \A_2$ must be a finite product of thin Blaschke products. Not every Blaschke product has this property so that $\A_2 \varsubsetneq L^\infty$.

\end{example}

\begin{theorem}\label{thm: sub1}
Let $Y \subset H^\infty (\D)$ be a subspace which is preserved as a set when composing the functions with surjective M\"obius transformation of the disk from the inside.
Let $\A\subset L^\infty (\partial \D)$ be the sub-$C^*$-algebra generated by $Y$. Then $\A$ is $\omega^*$-convex-transitive with respect to the group of weighted composition operators of the form
\[\widetilde{f} \mapsto \widetilde{u} \widetilde{(f\circ \phi)}\]
where $\widetilde{u}\in \A$ is a.e. unimodular and $\phi$ is a M\"obius transformation of the unit disk onto itself.
\end{theorem}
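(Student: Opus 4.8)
The plan is to run the two-step scheme of the proof of Theorem~\ref{thm: main}, replacing the reductions that exploited the quotient form $\widetilde f/\widetilde g$ of Douglas-algebra elements by arguments valid for an arbitrary element of the $C^*$-algebra. Since the statement invokes the unit $\1$ and unimodular multipliers $\widetilde u\in\A$, the algebra $\A$ must be unital; accordingly I take $\A$ to be the unital $C^*$-subalgebra of $L^\infty(\partial\D)$ generated by $Y$ (adjoining $\C\1$ if necessary, which preserves composition-invariance since composition fixes constants), so that $\C\1\subset\A$. Write $\mathcal{G}_\A$ for the group in the statement. It is a group of surjective isometries of $\A$: composition from the inside with an automorphism $\phi$ maps the generating subspace $Y$ into itself and commutes with products and with complex conjugation, hence preserves $\A$ and acts isometrically (as $\phi$ is a bilipschitz homeomorphism of $\partial\D$), while multiplication by an a.e.\ unimodular $\widetilde u\in\A$ is an isometry whose inverse is multiplication by $\overline{\widetilde u}\in\A$. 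The goal is $\overline{\conv}^{\omega^*}(\mathcal{G}_\A(x))=\B_\A$ for every $x\in\S_\A$, which by Lemma~\ref{lm: tri} (whose proof is unchanged for the subgroup $\mathcal{G}_\A$) reduces to the two claims $\1\in\overline{\conv}^{\omega^*}(\mathcal{G}_\A(x))$ and $\B_\A\subset\overline{\conv}^{\omega^*}(\mathcal{G}_\A(\1))$.

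For the first claim the essential point is that Lemma~\ref{lm: unit_spread} persists for \emph{all} of $L^\infty(\partial\D)$. Indeed, a change of variables (conformal invariance of harmonic measure) identifies the average $\dashint_{\partial\D}(x\circ\phi_a)\,d\theta$ with the value at $a$ of the Poisson extension $u_x$ of $x$; since $|u_x|\le\|x\|_\infty$ on $\D$ while, by Fatou's theorem, $u_x(a)\to x(\z_0)$ as $a\to\z_0$ non-tangentially at a.e.\ $\z_0$, and $\|x\|_\infty$ is the essential supremum of $|x|$, we obtain $\sup_{a\in\D}\bigl|\dashint_{\partial\D}(x\circ\phi_a)\,d\theta\bigr|=\|x\|_\infty=1$. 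Choosing $a_n\in\D$ and $\alpha_n\in\R$ with $\dashint_{\partial\D}(x\circ\phi_{a_n})\,d\theta=\rho_n e^{i\alpha_n}$ and $\rho_n\to1$, the rotations $R_n\colon f\mapsto e^{-i\alpha_n}(f\circ\phi_{a_n})$ lie in $\mathcal{G}_\A$ and satisfy $\|R_nx\|_\infty=1$ and $\dashint_{\partial\D}R_nx=\rho_n\in[0,1]$. Proposition~\ref{lm: unit_average} then gives $\rho_n\1\in\overline{\conv}^{\omega^*}(\psi_\z(R_nx)\colon\z\in\T)$, and since $z\mapsto\phi_{a_n}(z\overline\z)$ is again an automorphism, each $\psi_\z R_n$ is a weighted composition operator, so this set lies in $\overline{\conv}^{\omega^*}(\mathcal{G}_\A(x))$. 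Letting $n\to\infty$ and using norm-closedness of the weak-star closure together with $\rho_n\1\to\1$ yields $\1\in\overline{\conv}^{\omega^*}(\mathcal{G}_\A(x))$.

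The second claim is where the $C^*$-structure removes the case analysis of Theorem~\ref{thm: main}. Each a.e.\ unimodular $\widetilde u\in\A$ is a unitary of the unital abelian $C^*$-algebra $\A$ and equals $\widetilde u\cdot\1$, hence lies in $\mathcal{G}_\A(\1)$ (take $\phi=\mathrm{id}$). By the Russo--Dye theorem the norm-closed convex hull of these unitaries is all of $\B_\A$, so $\B_\A=\overline{\conv}(\mathcal{G}_\A(\1))\subset\overline{\conv}^{\omega^*}(\mathcal{G}_\A(\1))$. Combining the two claims through Lemma~\ref{lm: tri} gives $\B_\A\subset\overline{\conv}^{\omega^*}(\mathcal{G}_\A(x))$ for every $x\in\S_\A$; the reverse inclusion is automatic because $\B_\A=\A\cap\B_{L^\infty}$ is convex and $\sigma(\A,L^1)$-closed and contains the orbit. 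A final bookkeeping remark: the topology $\omega^*=\sigma(\A,L^1)$ is the restriction to $\A$ of the weak-star topology of $L^\infty$, so a $\sigma(\A,L^1)$-closure is the intersection of $\A$ with the weak-star closure in $L^\infty$; since $\B_\A=\A\cap\B_{L^\infty}$, the inclusions computed inside $L^\infty$ descend to $\overline{\conv}^{\omega^*}(\mathcal{G}_\A(x))=\B_\A$ in $\A$, i.e.\ the asserted $\omega^*$-convex-transitivity.

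I expect the first claim to be the only real obstacle, and within it the passage from $H^\infty$ to $L^\infty$ in the spreading estimate: for a non-analytic element of $\A$ the mean-value property is unavailable, and its substitute is the boundary behaviour of the Poisson integral supplied by Fatou's theorem, which is precisely what forces the averages $\dashint_{\partial\D}(x\circ\phi_a)\,d\theta$ up to the essential supremum of $|x|$. Two points merit explicit care in the write-up: the genuine invariance of $\A$ under the inner compositions (so that $\mathcal{G}_\A$ is well defined), resting on the composition-invariance of $Y$ together with the multiplicativity of composition; and the unitality of $\A$, which is what legitimizes both the constant multiplier $e^{-i\alpha_n}$ in the first step and the Russo--Dye argument in the second.
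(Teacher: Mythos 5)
Your proposal is correct and follows essentially the same route as the paper, which itself only sketches the argument as ``adapt Theorem~\ref{thm: main}, replacing Marshall's theorem by the Russo--Dye theorem applied to the unitaries of the $C^*$-algebra.'' In fact you supply a detail the paper leaves implicit: since a general element of $\A$ cannot be reduced to a boundary value of an $H^\infty$ function by dividing out an inner factor, the spreading estimate of Lemma~\ref{lm: unit_spread} must be extended to arbitrary $x\in L^\infty(\partial\D)$, and your identification of $\dashint_{\partial\D}(x\circ\phi_a)\,d\theta$ with the Poisson extension at $a$ together with Fatou's theorem does this correctly.
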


\begin{proof}
The argument is an adaptation of the proof  of Theorem \ref{thm: main}.

It is clear that the a.e. unimodular functions of $\A$ are exactly all the unitary elements. By the $C^*$-structure of the space this set is invariant
under complex conjugation. Therefore multiplication from the right with a unimodular function induces a surjective linear isometry on $\A$.
Then the Russo-Dye theorem implies that the closed convex hull of these points is the closed unit ball of $\A$.
We will use this fact analogously as in the application of Theorem \ref{thm: dense}.

The rest of the argument runs similarly as in the proof of Theorem \ref{thm: main}.
\end{proof}

Observe that if $Y$ contains the identity mapping $z\mapsto z$, then the Stone-Weierstrass theorem yields that $\A$ then already contains the space of continuous functions
$\cc$. This means that there are typically quite many unimodular functions.

\begin{remark}\label{thm: sub2}
Let $\Y \subset H^\infty (\D)$ be a closed unital subalgebra generated by a set of inner functions and conformally invariant. 
Then $N_{\Y}$ is a uniform algebra $\omega^*$-convex-transitive with respect to the group of weighted composition operators of the form
\[\widetilde{f} \mapsto \widetilde{\psi} \widetilde{(f\circ \phi)}\]
where $|\widetilde{\psi}|=1$ a.e. and $\phi$ is an automorphism of the unit disk.
\par
The argument is similar as above, applying the fact that the closed convex hull of
$\{a\overline{b}\colon a,b \in \Y \ \mathrm{inner}\}$ is the closed unit ball of $N_{\Y}$, see \cite{Mar} or \cite[p. 195]{G}.
\end{remark}

For example, if $\Y$ above coincides with the subalgebra generated by finite Blaschke products, then $N_{\Y}=\cc$.
The convex-transitivity of this complex space is known, albeit not with respect to such an
isometry subgroup.

\begin{example}
Let $\Y \subset H^\infty$ be the subset consisting of elements $y \in H^\infty$ such that $y$ extends continuously to $m$-almost every $\theta \in \partial\D$
(using the radial limits). This set clearly includes the disk algebra and it also contains many Blaschke products (interpolating or not) and singular inner functions. It is easy to verify that
$\Y$ is a Banach algebra. Let $\rm Z$ be any conformally invariant Banach algebra generated by $\{I, \overline{I}\colon I \in \mathcal{I}\}$ where $\mathcal{I}$ is
a collection  of inner functions of $\Y$ containing $\{1,z\}$. Then $\rm Z$ is convex-transitive (in the norm topology) with respect to the isometry group of weighted composition operators, similarly as above. By virtue of the regularity property of the space it follows easily that $\1_{\partial \D} \in \overline{\conv} (\mathcal{G}_{\rm Z} (x))$ for any $x \in \S_{\rm Z}$. On the other hand, by the previous remark we get $y \in  \overline{\conv} (\mathcal{G}_{\rm Z} (\1_{\partial \D} ))$ for any $y \in \S_{\rm Z}$.

Note that $\rm Z$ consists of functions extending continuously to the boundary almost everywhere. Therefore $H^\infty \not\subset \rm Z$.
We suspect that if $\mathcal{I}$ is the collection of all inner functions of $Y$, then the corresponding $\rm Z$ coincides with $N_Y$ (so that $N_Y$ would be convex-transitive).
\end{example}

\section{Discussion}

After Proposition \ref{lm: unit_average} we raised the question whether it suffices to take the
norm closure of the convex hull in the statement. We feel this question is rather intriguing. Recall that in the proof of this proposition we were able to pick a weak-star continuous functional $F$ in the Hahn-Banach separation by virtue of the weak-star closure. We now show how the argument applied fails if weak-star continuity is not imposed.

\begin{example}
There exists $F \in (L^\infty (\T))^*$, $\|F\|=1$, $F \notin L^1 (\T)$, (in particular, $F\neq \1_{\T}$) such that
\[F(f)=F(\psi_\zeta f)\quad \mathrm{for\ all}\ \zeta=e^{i\frac{2\pi}{n}},\ f\in L^\infty (\T) ,\ n\in\N .\]
\end{example}

Indeed, let $\mathcal{I}$ be the collection of all measurable subsets $I\subset \T$ having strictly positive Lebesgue measure.
Let $G_n \subset \T$, $n \geq 1$, be finite subgroups generated by $\{e^{i\frac{2\pi}{k}}\colon 1\leq k\leq n\}$.

For each $k\in\N$ let $I_k \in \mathcal{I}$ be a $G_k$-invariant set with measure $0<m(I_k ) \leq 2^{-k}$. For each $n\in\N$ let
\[J_n = \bigcup_{k=n}^\infty I_k .\]
Note that the $J_n$ sets are $G_n$-invariant and have measure $0< m(J_n ) \leq 2^{-n+1}$.

Let $\mathcal{U}$ be a free ultrafilter over $\N$ and put
\[F(f):=\lim_{n,\mathcal{U}} \dashint_{J_n} f,\quad f\in L^\infty (\T), \]
(limit with respect to ultrafilter, see e.g. \cite{Heinrich}). It is easy to see that this defines an element $F\in (L^\infty (\T))^*$ with the required properties.

\subsection{Acknowledgments}
This research was supported by Academy of Finland Project \#268009. The first named author was also supported by Spanish MINECO Research Project MTM2012-37436-C02-02 and the second one by Finnish V\"{a}is\"{a}l\"{a} Foundation's Research Grant.

\end{document}